\providecommand{\MR}{\relax\ifhmode\unskip\space\fi MR }
\providecommand{\href}[2]{#2}
\theoremstyle{plain}
\newtheorem{thm}{Theorem}[section]
\newtheorem{lem}[thm]{Lemma}
\newtheorem{prop}[thm]{Proposition}
\theoremstyle{remark}
\newtheorem{rem}[thm]{Remark}
\newcommand{\disp}{\displaystyle}
\DeclareMathOperator{\di}{div}
\DeclareMathOperator{\loc}{loc}
\newcommand{\eps}{\varepsilon}
\newcommand{\vp}{\varphi}
\newcommand{\al}{\alpha}
\newcommand{\be}{\beta}
\newcommand{\ga}{\gamma}
\newcommand{\de}{\delta}
\newcommand{\Ga}{\Gamma}
\newcommand{\la}{\lambda}
\newcommand{\Om}{\Omega}
\newcommand{\nid}{\noindent}
\newcommand{\iny}{\infty}
\newcommand{\del}{ \partial}
\newcommand{\su}{\subset}
\newcommand{\LP}{\Delta}
\newcommand{\gr}{\nabla}
\newcommand{\norm}[1]{\left\| #1\right\|}
\newcommand{\innp}[1]{\left< #1 \right>}
\newcommand{\abs}[1]{\left\vert#1\right\vert}
\newcommand{\set}[1]{\left\{#1\right\}}
\newcommand{\brac}[1]{\left[#1\right]}
\newcommand{\pr}[1]{\left( #1 \right) }
\newcommand{\WT}[1]{\ensuremath{\widetilde{#1}}}
\newcommand{\R}{\ensuremath{\mathbb{R}}}
\newcommand{\C}{\ensuremath{\mathbb{C}}}
\date{}
\begin{document}

\title{On Landis' conjecture in the plane \\ for potentials with growth}
\author[Davey]{Blair Davey}
\address{Department of Mathematical Sciences, Montana State University, Bozeman, MT, 59717}
\dedicatory{Dedicated to Carlos Kenig on the occasion of his 70th birthday}
\email{blairdavey@montana.edu}
\thanks{Davey is supported in part by the Simons Foundation Grant 430198 and the National Science Foundation DMS - 2137743.}
\subjclass[2010]{35B60, 35J10}
\keywords{Landis conjecture, unique continuation, Schr\"odinger equation}

\begin{abstract}
We investigate the quantitative unique continuation properties of real-valued solutions to Schr\"odinger equations in the plane with potentials that exhibit growth at infinity.
More precisely, for equations of the form $\LP u - V u = 0$ in $\R^2$, with $\abs{V(z)} \lesssim \abs{z}^{N}$ for some $N \ge 0$, we prove that real-valued solutions  satisfy exponential decay estimates with a rate that depends explicitly on $N$.
The case $N = 0$ corresponds to the Landis conjecture, which was proved for real-valued solutions in the plane in \cite{LMNN20}.
As such, the results in this article may be interpreted as generalized Landis-type theorems.
Our proof techniques rely heavily on the ideas presented in \cite{LMNN20}. 
\end{abstract}

\maketitle

\section{Introduction}

In the late 1960s, E.~M.~Landis \cite{KL88} conjectured that if $u$ is a bounded solution to 
\begin{equation}
\label{ePDE}
\LP u - V u = 0 \; \text{ in } \, \R^n,
\end{equation}
where $V$ is a bounded function and $u$ satisfies $\abs{u(x)} \lesssim \exp\pr{- c \abs{x}^{1+}}$, then $u \equiv 0$.
This conjecture was later disproved by Meshkov \cite{M92} who constructed non-trivial $\C$-valued functions $u$ and $V$ that solve $\LP u - V u = 0$ in $\R^2$, where $V$ is bounded and $\abs{u(x)} \lesssim \exp\pr{- c \abs{x}^{4/3}}$. 
Meshkov also proved a \textit{qualitative unique continuation} result: 
If $\LP u - V u = 0$ in $\R^n$, where $V$ is bounded and $u$ satisfies a decay estimate of the form $\abs{u\pr{x}} \lesssim \exp\pr{- c \abs{x}^{4/3+}}$, then necessarily $u \equiv 0$.

In their work on Anderson localization \cite{BK05}, Bourgain and Kenig established a quantitative version of Meshkov's result. 
As a first step in their proof, they used three-ball inequalities derived from Carleman estimates to establish \textit{order of vanishing} estimates for local solutions to Schr\"odinger equations.
Then, through a scaling argument, they proved a \textit{quantitative unique continuation} result.
More specifically, they showed that if $u$ and $V$ are bounded, and $u$ is normalized so that $\abs{u(0)} \ge 1$, then for sufficiently large values of $R$,
\begin{equation}
 \inf_{|x_0| = R}\norm{u}_{L^\iny\pr{B(x_0, 1)}} \ge \exp{(-CR^{\be}\log R)},
\label{est}
\end{equation} 
where $\be = \frac 4 3$.
Since $ \frac 4 3 > 1$, the constructions of Meshkov, in combination with the qualitative and quantitative unique continuation theorems just described, indicate that Landis' conjecture cannot be true for complex-valued solutions in $\R^2$.
However, at the time, Landis' conjecture still remained open in the real-valued and higher-dimensional settings.
In \cite[Question 1, 2]{Ken06}, Kenig asked if the exponent could be reduced from $\frac 4 3$ down to $1$ in the real-valued setting; and if the related order of vanishing estimate could be improved to match those of Donnelly-Fefferman from \cite{DF88, DF90}. 

In recent years, there has been a surge of activity surrounding Landis' conjecture in the real-valued planar setting.
The breakthrough article \cite{KSW15} by Kenig, Silvestre and Wang proved a quantitative form of Landis' conjecture under the assumption that the zeroth-order term satisfies $V \ge 0$ a.e.
Subsequent papers established analogous results in the settings with drift terms \cite{KW15}, variable coefficients \cite{DKW17}, and singular lower-order terms \cite{KW15, DW20}.
Then we showed that this theorem still holds when $V_-$ exhibits rapid decay at infinity \cite{DKW19}, and when $V_-$ exhibits slow decay at infinity \cite{Dav20a}.
The work of Logunov, Malinnikova, Nadirashvili, and Nazarov \cite{LMNN20} shows that Landis' conjecture holds in the real-valued planar setting.
Their proof uses the nodal structure of the domain along with a domain reduction technique to eliminate any sign condition on the zeroth-order term.
The techniques and ideas from \cite{LMNN20} will be used extensively in this article.

In \cite{Dav14}, I studied the quantitative unique continuation properties of solutions to more general elliptic equations of the form 
$$\LP u + W \cdot \gr u + V u = \la u \; \text{ in } \; \R^n,$$
where $V$ and $W$ exhibit pointwise decay at infinity, and $\la \in \C$.
It was shown that if $\abs{V\pr{x}} \lesssim \innp{x}^{-N}$ and $\abs{W\pr{x}} \lesssim \innp{x}^{-P}$ for $N, P \ge 0$, then the quantitative estimate \eqref{est} holds with $\be = \max \set{1, \frac{4-2N}{3}, 2 - 2P}$.
These results complement those in \cite{CS97}, where analogous qualitative unique continuation theorems are established in the setting where $W \equiv 0$ and $N \in \R$.
By building on the ideas of Meshkov from \cite{M92}, \cite{Dav14} contains examples which prove that the estimates are sharp in certain settings, with further examples in \cite{Dav15}.
These quantitative estimates were generalized in \cite{LW14}, where they proved analogous estimates for solutions to the corresponding equations with variable-coefficient leading terms.

This paper is concerned with proving quantitative unique continuation results for equations of the form \eqref{ePDE}, where $n = 2$, $u$ and $V$ are real-valued, and $V$ exhibits growth at infinity.
We build off of the techniques in \cite{LMNN20} to establish quantitative versions of the results from \cite{CS97} in the setting where $V$ is real-valued and growing (denoted by $\eps \le 0$ in that article).
We now give the precise statement of the main theorem.

\begin{thm}
\label{LandisGrowth}
For some $N \ge 0$, $a_0 > 0$, let $V : \R^2 \to \R$ satisfy the growth condition 
\begin{equation}
\label{growthV}
\abs{V(z)} \le a_0 \abs{z}^{N}.
\end{equation}
Let $u$ be a real-valued solution to \eqref{ePDE} in $\R^2$ with the properties that 
\begin{equation}
\label{solNorm}
\abs{u(0)} = 1
\end{equation}
and for some $c_0 > 0$,
\begin{equation}
\label{solBound}
\abs{u(z)} \le \exp\pr{c_0 \abs{z}^{1 + \frac N 2}}.
\end{equation}
Then there exists constants $C_0 = C_0\pr{a_0, c_0, N} > 0$ and $R_0 > 0$ so that whenever $\abs{z_0} \ge R_0$, it holds that
\begin{equation}
\label{uLower}
\norm{u}_{L^\iny\pr{B(z_0, 1)}} \ge \exp\pr{- C_0 \abs{z_0}^{1+\frac N 2} \log^{\frac 3 2} \abs{z_0}}.
\end{equation}
\end{thm}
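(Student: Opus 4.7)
My approach is to reduce Theorem \ref{LandisGrowth} to an estimate for a Schr\"odinger equation with bounded potential by a single dilation at the natural scale dictated by the growth rate of $V$, and then to invoke an extension of the planar Landis theorem of \cite{LMNN20} tolerating real-valued solutions whose $L^\iny$-norm grows exponentially on the disk.

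\textbf{Step 1 (Rescaling).} Write $R = |z_0|$, pick a small constant $c = c(a_0,N) > 0$, and set $\ell = c R^{-N/2}$, $\rho := R/\ell = c^{-1} R^{1+N/2}$, $w_0 := z_0/\ell$, $v(w) := u(\ell w)$. Then $\LP v - \WT V v = 0$ with $\WT V(w) = \ell^2 V(\ell w)$, and the growth assumption \eqref{growthV} combined with the choice of $c$ gives
\[
|\WT V(w)| \le a_0 \ell^{N+2} |w|^N \le 1 \quad \text{on } B(0, 2\rho).
\]
The normalisation $|v(0)| = 1$ and the identity $|w_0| = \rho$ are immediate, and the envelope \eqref{solBound} transfers to
\[
|v(w)| \le \exp\!\pr{c_0 \ell^{1+N/2}|w|^{1+N/2}} \le \exp(C_1 \rho) \quad \text{on } B(0, 2\rho).
\]

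\textbf{Step 2 (LMNN with an exponential envelope).} The technical heart of the proof is to establish the following extension of the main theorem of \cite{LMNN20}: \emph{if $v$ is a real-valued solution to $\LP v - \WT V v = 0$ on $B(0,2\rho)$ with $|\WT V|\le 1$, $|v(0)| = 1$, and $|v(w)|\le \exp(C_1 \rho)$ on $B(0,2\rho)$, then for $|w_0| = \rho$ and $\rho$ sufficiently large,} $\|v\|_{L^\iny(B(w_0,1))} \ge \exp\!\pr{-C_2 \rho \log^{3/2} \rho}$.
I plan to prove this by adapting the nodal-component and domain-reduction strategy of \cite{LMNN20}. The essential ingredients are three-disk inequalities on nodal subdomains of $v$, whose constants are independent of $\|\WT V\|_\iny^{1/2}$, and iteration of these inequalities along a chain of disks connecting $0$ to $w_0$ of combinatorial length $O(\rho)$. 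The exponential envelope $e^{C_1\rho}$ propagates through each three-disk step as an additive $O(\rho)$ contribution in the log, and the extra factor $\log^{1/2}\!\rho$ emerges when optimising the radii along the chain to balance the envelope against its length.

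\textbf{Step 3 (Unwinding) and main obstacle.} The change of variable $z = \ell w$ converts $B(w_0, 1)$ into $B(z_0, \ell) \subset B(z_0, 1)$ (using $\ell \le 1$ for large $R$); since $\rho \asymp R^{1+N/2}$ and $\log\rho \asymp \log R$, Step 2 delivers
\[
\|u\|_{L^\iny(B(z_0,1))} \ge \|u\|_{L^\iny(B(z_0,\ell))} \ge \exp\!\pr{-C_0 R^{1+N/2} \log^{3/2} R},
\]
which is precisely \eqref{uLower}. The principal obstacle is Step 2: the result of \cite{LMNN20} is stated for globally bounded solutions, and one must carefully carry the exponential envelope of $v$ through every stage of the nodal-domain and harmonic-measure analysis, tracking how it enters each three-disk step and how the optimal chain of disks must be chosen to produce the specific $\log^{3/2}\!R$ factor in \eqref{uLower} rather than the $\log R$ of the bounded Landis problem.
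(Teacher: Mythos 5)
Your reduction is genuinely different from the paper's. The paper translates the equation by $z_0$ (no dilation), and then proves a local order-of-vanishing result (its Theorem \ref{localLandis}) that handles potentials with polynomial growth \emph{directly}, by tuning the fine scale $\rho$ of the nodal puncturing to $\rho \sim (\log R)^{-1/2}\, a^{-1}R^{-\de}$ so that $\rho^2\|V\|_\iny$ is small. You instead dilate at scale $\ell \sim R^{-N/2}$ to normalize $\|\WT V\|_\iny \le 1$, which pushes all of the growth into the size of the domain ($\rho \sim R^{1+N/2}$) and into the envelope $\exp(C_1\rho)$. These are dual bookkeeping schemes; both need an LMNN-type statement tolerating a large normalization constant $M \sim \rho$. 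Your Steps 1 and 3 are computed correctly (one should also translate by $w_0$ before applying an order-of-vanishing bound, since such a bound is centered at the origin, but that is routine).

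The genuine gap is Step 2, which you yourself flag as the ``principal obstacle.'' What you need there is essentially the paper's Theorem \ref{localLandis} specialized to $\de = 0$, $a = 1$, and $M \sim \rho$, and this is not at all a minor extension of \cite{LMNN20} — it is the main technical work of the paper. Moreover, the mechanism you sketch for proving it does not match how the LMNN argument actually propagates smallness, and I do not believe it would produce the $\log^{3/2}$ exponent. The LMNN proof does \emph{not} iterate three-disk inequalities along a chain of length $O(\rho)$; rather, it punctures the domain by a collection of small disks near the nodal set, constructs a comparison function $\vp$ and a quotient $f = u/\vp$ solving a divergence-form equation, performs a quasiconformal change of variables to reduce $f$ to a harmonic function $h$ on a punctured disk, and then applies a maximum-principle argument to the holomorphic function $(h_x - ih_y)/z^k$ on the punctured annulus (the paper's Proposition \ref{harmonicProp}). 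The extra $\log^{1/2}$ arises from the choice $\eps \sim (\log R)^{-1/2}$ governing the puncture radius, which balances the quasiconformal distortion $K = 1 + O(\eps^2)$ against the size $\WT R \sim R/\eps$ of the rescaled punctured domain; it is not produced by optimizing a chain length against an exponential envelope. So even granting the heuristics, the concrete plan for Step 2 is not one that would close the argument, and without Step 2 the proof is incomplete.
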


The results of \cite{CS97} establish qualitative versions of \eqref{est} with $\be = \frac{4 + 2N}{3} = \frac 4 3 \pr{1 + \frac N 2}$ under the assumption \eqref{growthV} in the complex-valued setting.
Thus, as in the case of bounded $V$, this theorem shows that stronger bounds hold in the real-valued planar setting.

For some $\be > 0$, $c \ne 0$, let 
$$u(z) = \exp\pr{ c \abs{z}^\be}.$$
A computation shows that $\LP u - V u = 0$, where $V(z) := c \be^2 \pr{c\abs{z}^{\be} + 1}\abs{z}^{\be - 2}$
satisfies $\abs{V(z)} \lesssim \abs{z}^{2\be - 2}$.
By setting $\be = 1 + \frac N 2$, this example shows that the theorem is sharp whenever $N \ge 0$.
Based on this example, it seems reasonable to assume that a version of Theorem \ref{LandisGrowth} also holds for potentials that decay at infinity, i.e. for $N < 0$.
To extend the arguments in this paper to decaying potentials, an iterative argument reminiscent of those in \cite{Dav14}, \cite{LW14}, \cite{DKW19}, or \cite{Dav20a} may be needed.
This approach was attempted in the preparation of this manuscript, but the exponent ``got stuck" above $1$ and a resolution to this issue was unclear at the time.
In other words, modifications to the techniques of this paper do not appear to give such results for decaying potentials.
In subsequent articles, we will study both singular potentials and potentials that exhibit decay at infinity.

To prove Theorem \ref{LandisGrowth}, we establish the following local result.
Note that the $R_0 > 0$ here is the same universal constant as in Theorem \ref{LandisGrowth}.

\begin{thm}
\label{localLandis}
Let $u$ be a real-valued solution to $\LP u - V u = 0$ in $B\pr{0, R} \su \R^2$, where$V$ is real-valued and $\norm{V}_{L^\iny(B(0,R))} \le a^2 R^{2 \de}$ for some $\de \ge 0$, $a \ge 1$, $R > 0$.
If $R \ge R_0$, $S \in \brac{\frac R 4, \frac R 2}$, and there exists $M > 0$ so that
\begin{equation}
\label{unormalization}
\sup_{z \in B\pr{0, R - S}} \abs{u(z)}  \ge e^{-M} \sup_{z \in B(0, R)} \abs{u(z)},
\end{equation}
then there exists universal $C_1 > 0$ so that whenever $r \in \pr{0, \frac R {2^{10}}}$, it holds that
\begin{equation}
\label{ulowerBound}
\sup_{z \in B\pr{0, r}} \abs{u(z)} \ge \pr{\frac r R}^{K(R,M)} \sup_{z \in B(0, R-S)} \abs{u(z)},
\end{equation}
where $K(R,M) = C_1 \max \set{a R^{1+\de} \sqrt{\log R}, M+\frac 1 {\log R}}$.
\end{thm}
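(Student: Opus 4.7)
\textbf{Proof plan for Theorem \ref{localLandis}.}
The plan is to follow the strategy of \cite{LMNN20}, suitably adapted to accommodate the growth of $V$. A rescaling to the unit disc is the natural first move: setting $\tilde u(z) = u(Rz)$, one finds that $\tilde u$ solves $\LP \tilde u - \tilde V \tilde u = 0$ on $B(0,1)$ with $\norm{\tilde V}_{L^\iny(B(0,1))} \le a^2 R^{2+2\de} =: \La^2$. The ball $B(0, R-S)$ becomes $B(0,\rho)$ with $\rho \in [1/2, 3/4]$, and the target inequality \eqref{ulowerBound} transforms into a doubling-type lower bound whose exponent must be controlled by $C \La \sqrt{\log R}$ plus an additive $M$ contribution.

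Next I carry out the nodal-domain reduction of \cite{LMNN20}, which removes the need for any sign restriction on $V$ of the sort used in \cite{KSW15}. Since $\tilde u$ is real-valued, its zero set partitions $B(0,1)$ into nodal components on each of which $\tilde u$ has definite sign. On any simply connected component $\Om$, one introduces a stream function $\tilde v$ so that $f = \tilde u + i \tilde v$ satisfies a Beltrami-type equation $\bar{\del} f = \mu \del f + \nu \bar f$ with $\norm{\mu}_{L^\iny}$ bounded by an absolute constant strictly less than $1$. Thus $f$ is $K$-quasiregular for a universal $K$, and the doubling estimates available for quasiregular maps apply uniformly on $\Om$, regardless of the sign of $V$.

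The third step combines two ingredients: (i) a standard Carleman/three-disc inequality for $\LP - \tilde V$, whose exponent is proportional to $\La$ thanks to the $\norm{\tilde V}_{L^\iny}^{1/2}$ factor in the Carleman weight, and (ii) the quasiregular doubling bound from step two, which controls $\tilde u$ across many small nodal components and is insensitive to $V$. The normalization \eqref{unormalization} supplies a point $z^* \in B(0, R-S)$ with $\abs{\tilde u(z^*)} \ge e^{-M} \sup_{B(0,1)} \abs{\tilde u}$, and chaining these three-disc inequalities along a sequence of $\sim \log(R/r)$ overlapping discs from $z^*$ down to $B(0, r/R)$ yields the desired lower bound with exponent at most $C\pr{\La \sqrt{\log R} + M + 1/\log R}$. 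The $\sqrt{\log R}$ factor emerges from optimizing the Carleman parameter at the scale $\La = a R^{1+\de}$, exactly as in \cite{LMNN20}; the $\max$ in $K(R,M)$ records the two regimes in which either the potential size or the normalization loss $M$ dominates.

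The main obstacle is ensuring that the quasiconformal step and the Carleman step interact cleanly when $V$ is genuinely large of size $\La^2$: one must verify that the Beltrami coefficients $\mu, \nu$ remain bounded by an absolute constant independent of $R$ after the initial rescaling (so that quasiregularity is preserved with an $R$-independent ellipticity constant), and one must check that the factor $\sqrt{\log R}$ arises only from the Carleman optimization and is not amplified by either the chaining or the quasiregular step. The growth of $V$ enters the argument only through $\La$ and does not otherwise alter the structure of the proof in \cite{LMNN20}.
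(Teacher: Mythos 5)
Your proposal gets the high-level scaling right (the growth of $V$ enters only through $\La = aR^{1+\de}$, and the target exponent is $\La\sqrt{\log R}$), but the mechanism you describe is not the one the paper uses, and as stated it has gaps that would not close.

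First, the paper does not use Carleman estimates at all. The reduction to harmonic functions in Section \ref{localProof} replaces the Carleman step entirely: after removing the nodal set $F_0$ and a set $F_1$ of small punctures of radius $\rho = \eps a^{-1}R^{-\de}$, the Poincar\'e constant of $\Om = B(0,R)\setminus(F_0\cup F_1)$ drops to $\sim\rho^2$, which makes it possible to build an auxiliary solution $\vp$ of the same equation with $\|\vp-1\|_\iny\le c_b\eps^2$; then $f=u/\vp$ solves the divergence-form equation $\di(\vp^2\gr f)=0$, whose ellipticity deviation from the Laplacian is $O(\eps^2)$. The three-ball inequality is proved by a direct complex-analytic argument (Proposition \ref{harmonicProp}, the analysis of $(h_x-ih_y)/z^k$ and the maximum principle on the punctured annulus), not by chaining Carleman-based three-disc inequalities. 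The statement that the exponent is ``proportional to $\La$ thanks to the $\norm{\tilde V}^{1/2}_{L^\iny}$ factor in the Carleman weight'' is not what happens: the classical Carleman/Bourgain--Kenig route yields $\La^{4/3}$, and improving to $\La\sqrt{\log\La}$ is precisely what the LMNN machinery is for.

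Second, and more seriously, the stream-function step you describe (introduce $\tilde v$ with $f=\tilde u+i\tilde v$ quasiregular with an absolute $K<\iny$) is the approach of \cite{KSW15}, which needs $V\ge 0$, not the approach of \cite{LMNN20} and of this paper. Even setting that aside, an absolute quasiregularity constant $K$ would not produce the stated exponent: Mori's theorem would then distort radii by a power $R^{K-1}$, which is polynomial in $R$, and the resulting exponent would be off by a polynomial factor. The whole point of choosing $\eps = c_e/\sqrt{\log R}$ in \eqref{epsDef} is to make the Beltrami coefficient $\lesssim\eps^2\sim 1/\log R$, hence $K-1\sim 1/\log R$, so that $R^{K-1}\simeq 1$; the tradeoff is that the punctures then number $\sim(\eps^{-1}aR^{1+\de})^2$, and after rescaling to unit disks the effective radius becomes $\WT R = C_3 aR^{1+\de}\sqrt{\log R}$ as in \eqref{tildeRDef}, which is exactly where the $\sqrt{\log R}$ in $K(R,M)$ originates. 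Attributing $\sqrt{\log R}$ to a Carleman parameter optimization is therefore not merely a cosmetic misstatement; it hides the step that actually drives the exponent. Your ``main obstacle'' paragraph gestures at the issue (the Beltrami coefficient must stay controlled) but resolves it in the wrong direction (absolute constant vs.\ $o(1)$), which is the crux of the matter.
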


The proof of this theorem will be presented below in Section \ref{localProof}.
As in \cite{LMNN20}, we reduce the problem to a question about harmonic functions.
Those details are provided in Section \ref{harmonic}.

Assuming that Theorem \ref{localLandis} holds, we present the proof of Theorem \ref{LandisGrowth}.

\begin{proof}[The proof of Theorem \ref{LandisGrowth}]
Fix $z_0 \in \R^2$ with $\abs{z_0} \ge \frac{R_0}2$.
Set $R = 2 \abs{z_0} \ge R_0$ and $S = \frac R 2$.
Define 
$$u_0(z) = u(z_0 + z) \;\quad \text{and} \;\quad V_0(z) = V(z_0 + z)$$ 
so that 
$$\LP u_0 + V_0 \, u_0 = 0 \;\; \text{ in } \; \; B(0, R).$$
Since $\abs{z_0 + z} \le \frac 3 2 R$ for $z \in B(0, R)$, then 
$$\norm{V_0}_{L^\iny\pr{B(0, R)}} \le a_0 \pr{\frac 3 2 R}^N = a_0 \pr{\frac 3 2}^N R^N$$
and 
$$\sup_{z \in B(0, R)} \abs{u_0(z)} \le \exp\brac{c_0\pr{\frac 3 2}^{1 + \frac N 2} R^{1+\frac N 2}}.$$
As 
$$\sup_{B(0, R - S)}\abs{u_0} = \sup_{B(z_0, \abs{z_0})}\abs{u} \ge \abs{u(0)} = 1,$$
then Theorem \ref{localLandis} is applicable with $\de = \frac N 2$, $a = \max\set{\sqrt {a_0} \pr{\frac 3 2}^{\frac N 2}, 1}$, and $M = c_0 \pr{\frac 3 2}^{1 + \frac N2} R^{1+\frac N2}$.
Since 
$$K(R,M) = C_1 \max \set{a R^{1 + \frac N 2} \sqrt{\log R}, c_0 \pr{\frac 3 2}^{1 + \frac N 2} R^{1+\frac N 2}+\frac 1 {\log R}} \le c_1 R^{1+\frac N 2} \sqrt{\log R},$$
where $c_1 = C_1 \brac{a + \pr{\frac{3}2}^{1 + \frac N 2} c_0}$, then
\begin{align*}
\sup_{B\pr{z_0, r}} \abs{u} 
= \sup_{B\pr{0, r}} \abs{u_0} 
\ge \pr{\frac r R}^{c_1 R^{1+N} \sqrt{\log R}} \sup_{B(0, R-S)} \abs{u_0} 
= \pr{\frac r R}^{c_1 R^{1+N} \sqrt{\log R}} \sup_{B(z_0,\frac R 2)} \abs{u} 
\ge \pr{\frac r R}^{c_1 R^{1+N} \sqrt{\log R}} .
\end{align*}
Setting $r = 1$ then shows that
\begin{align*}
\sup_{B\pr{z_0, 1}} \abs{u} 
\ge \exp\pr{- c_1 R^{1+\frac N2} \log^{\frac 3 2} R}
\ge \exp\pr{- C_0 \abs{z_0}^{1+\frac N2}  \log^{\frac 3 2} \abs{z_0}},
\end{align*}
where $C_0 = c_1 2^{1 + \frac N 2} \pr{\frac{10}9}^{\frac 3 2}$.
\end{proof}

The remainder of the article is organized as follows.
In Section \ref{harmonic}, we present and prove a unique continuation theorem for harmonic functions in punctured domains.
As in \cite{LMNN20}, this result for harmonic functions is essential to the proof of Theorem \ref{localLandis}.
We describe this reduction in Section \ref{localProof}, and explain how it implies the proof of Theorem \ref{localLandis}.
We use $c, C$ to denote constants that may change from line to line, while constants with subscripts are fixed.
Unless stated otherwise, all constants are universal.

\section{Decay properties of harmonic functions in punctured domains}
\label{harmonic}

In this section, we present and prove quantitative unique continuation results (in the form of three-ball inequalities) for harmonic functions in punctured domains.
The next section shows how these results lead to the proof of Theorem \ref{localLandis}.
We begin with an application of the Harnack inequality.

\begin{lem}
\label{discBounds}
Let $\set{D_j}$ be a finite collection of $100$-separated unit disks in the plane.
Assume that $h$ is real-valued and harmonic in $\R^2 \setminus \cup D_j$ and that for each $j$, $h$ doesn't change sign in $5D_j \setminus D_j$.
There exists an absolute constant $C_H \ge 10$ for which
\begin{enumerate}
\item $\disp \max_{\del 3 D_j} \abs{h} \le C_H \min_{\del 3 D_j} \abs{h}$
\item $\disp \max_{\del 3 D_j} \abs{\gr h} \le C_H \min_{\del 3 D_j} \abs{h}$.
\end{enumerate}
\end{lem}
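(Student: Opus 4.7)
The plan is to treat each disk $D_j$ separately and exploit the fact that the $100$-separation forces the annulus $5D_j\setminus D_j$ to be disjoint from every other $D_k$. So on the open annulus $A_j = 5 D_j \setminus \overline{D_j}$ the function $h$ is harmonic, and by hypothesis it has constant sign there; replacing $h$ by $-h$ if needed, I may assume $h \ge 0$ on $A_j$. The circle $\partial 3D_j$ sits strictly inside $A_j$, at distance $2$ from each boundary component, so everything can be done comfortably with interior estimates.

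For part (1), I would fix a compact subannulus $K_j = \{z : \tfrac32 \le |z - z_j| \le \tfrac92\}\subset A_j$ that contains $\partial 3D_j$ well in its interior. The standard Harnack chain argument (covering $K_j$ by a fixed finite number of balls of radius $\tfrac14$, say, each of which has a Harnack-constant comparison with the next) produces an absolute constant $C$ such that $\max_{K_j} h \le C \min_{K_j} h$. Restricting both sides to $\partial 3D_j \subset K_j$ gives $\max_{\partial 3D_j} h \le C \min_{\partial 3D_j} h$, which is (1). The constant is universal because every $D_j$ is a translate of the unit disk, so $A_j$ is the same annulus up to translation and the Harnack chain has the same geometry for all $j$.

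For part (2), I would combine the interior gradient estimate with the comparison from part (1). For each $x \in \partial 3D_j$ one has $B(x,1) \subset K_j$, so the classical bound $|\nabla h(x)| \le C' \max_{B(x,1)} h$ applies with a universal $C'$. Chaining this with the already-established $\max_{K_j} h \le C \min_{\partial 3D_j} h$ yields $|\nabla h(x)| \le C C' \min_{\partial 3D_j} h$, which is (2). One then sets $C_H = \max\{10, C, CC'\}$ to accommodate the stated lower bound $C_H \ge 10$.

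There is no real obstacle: the only point requiring care is the uniformity of the Harnack constant across $j$, which is immediate from the translation invariance of the setup and the fact that the $100$-separation keeps the other disks uniformly far from $A_j$. Note also that the harmonicity of $h$ on $\mathbb{R}^2 \setminus \bigcup D_k$ is used only in the open annulus $A_j$, so no issue arises from the behavior of $h$ on or inside $D_j$ itself.
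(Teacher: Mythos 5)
Your proposal is correct and takes essentially the same route as the paper: Harnack's inequality on an annulus surrounding $\partial 3D_j$ (whose geometry is translation-invariant and uniform in $j$ thanks to the $100$-separation) gives part (1), and an interior gradient estimate combined with that Harnack comparison gives part (2). The only cosmetic difference is in part (2): the paper invokes the sharper Cauchy-type bound $\abs{\gr h(z)} \le \abs{h(z)}$ for a sign-definite harmonic function on $B(z,2)$ (as in Han--Lin, Lemma 1.11), which folds the Harnack step you perform separately into a single inequality, but the substance is identical.
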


\begin{proof}
An application of the Harnack inequality shows that there exists $C_H > 0$ so that for every $j$
\begin{align*}
\max_{\del 3 D_j} \abs{h}
&\le \sup_{4 D_j \setminus 2 D_j} \abs{h} 
\le C_H \inf_{4 D_j \setminus 2 D_j} \abs{h}
\le C_H \min_{\del 3 D_j} \abs{h}.
\end{align*}
For each $z \in \del 3 D_j$, since $h$ doesn't change signs in $B(z, 2)$, an application of Cauchy's inequality as in \cite[Lemma 1.11]{HL11} shows that
\begin{align*}
\abs{\gr h(z)}
&\le \abs{h(z)}
\end{align*}
and the conclusion follows.
\end{proof}

We now state and prove the main result of this section.
The following is a slight modification of the result \cite[Theorem 5.3]{LMNN20}.

\begin{prop}
\label{harmonicProp}
Let $\set{D_j}$ be a finite collection of $100$-separated unit disks in the plane for which $\disp 0 \notin \cup 3 D_j$.
For some $R \ge 2^{10}$, let $h$ be a harmonic function in $B(0, R) \setminus \cup D_j$ with the property that for each $j$, $h$ doesn't change sign in $\pr{5D_j \setminus D_j} \cap B(0,R)$.
Assume that for $S \in \brac{\frac R 4, \frac R 2}$ and for some $M > 0$, it holds that
\begin{equation}
\label{normalization}
\sup_{z \in B\pr{0, R - \frac{S}{32}} \setminus \cup 3 D_j} \abs{h(z)}  \ge e^{-M} \sup_{z \in B(0, R) \setminus \cup 3 D_j} \abs{h(z)}.
\end{equation}
Then for every $r \in \pr{0, \frac R {2^{10}}}$, we have
\begin{equation}
\label{lowerBound}
\sup_{z \in B\pr{0, r} \setminus \cup 3 D_j} \abs{h(z)} 
\ge  \pr{\frac{16 r} R}^{K\pr{R,M}} \sup_{z \in B\pr{0, R - \frac{S}{32}} \setminus \cup 3 D_j} \abs{h(z)},
\end{equation}
where $K(R,M) = \max \set{6C_HR, C_2 M}$, $C_H \ge 10$ is from Lemma \ref{discBounds}, and $C_2 > 0$ is universal. \\
\end{prop}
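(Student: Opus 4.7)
The plan is to adapt \cite[Theorem 5.3]{LMNN20} to our setting, tracking the constants with care. Throughout, write $\Om = B(0, R) \setminus \cup 3 D_j$, set $M_*(\rho) := \sup_{B(0, \rho) \cap \Om} \abs{h}$, and abbreviate $A := M_*(R - S/32)$, so that hypothesis \eqref{normalization} reads $A \ge e^{-M} M_*(R)$ and the target \eqref{lowerBound} is $M_*(r) \ge \pr{16 r / R}^{K(R,M)} A$ for every $r \in \pr{0, R/2^{10}}$.

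The core of the argument will be a Hadamard-type three-circle inequality for $h$ on punctured annular regions $\set{\rho_1 \le \abs{z} \le \rho_3} \cap \Om$. Because $h$ is not harmonic across the $3 D_k$, I would replace the standard circles $\set{\abs{z} = \rho_j}$ by modified curves $\gamma_j$ that detour around $\del 3 D_k$ whenever a disk meets the circle. The hypothesis that $h$ does not change sign on each $5 D_k \setminus D_k$, combined with Lemma \ref{discBounds}, yields $\max_{\del 3 D_k} \abs{h} \le C_H \min_{\del 3 D_k} \abs{h}$ together with a matching bound on $\abs{\gr h}$; consequently $\log \max_{\gamma_j} \abs{h}$ is a good substitute for $\log \sup_{\abs{z} = \rho_j} \abs{h}$, with an error of at most $\log C_H$ per detour. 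Applying the maximum principle to $\log \abs{h}$ on $\Om$, with the boundary values on $\del 3 D_k$ controlled by Lemma \ref{discBounds}, I expect to obtain an inequality of the form
\begin{equation*}
M_*(\rho_2) \le C_H^{N(\rho_3)} \, M_*(\rho_1)^{\al} \, M_*(\rho_3)^{1 - \al},
\end{equation*}
where $\al = \log(\rho_3/\rho_2)/\log(\rho_3/\rho_1)$ and $N(\rho_3) = O(\rho_3)$ counts the $100$-separated unit disks that can meet the circle of radius $\rho_3$.

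Next, I would iterate this inequality dyadically from $\rho_1 = r$ up through $\rho \sim R - S/32$, which requires $O(\log(R/r))$ steps; telescoping the $(1 - \al)$ factors transfers the normalization $A \ge e^{-M} M_*(R)$ down to $M_*(r)$. The accumulated loss has two sources: the $C_H^{N(\rho_3)}$ factors, which contribute linearly in $R$ to the exponent of $16 r / R$, and the factor $e^M$ from \eqref{normalization}, which enters additively as $C_2 M$. Taking the maximum of these two contributions then produces the exponent $K(R,M) = \max \set{6 C_H R, C_2 M}$ appearing in \eqref{lowerBound}, and the constant $16$ in the base $16 r / R$ is absorbed by one final application of Harnack/Lemma \ref{discBounds} at the innermost scale to pass from $M_*(2 r)$ back down to $M_*(r)$.

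The main obstacle, I expect, is the rigorous justification of the three-circle inequality on the punctured annulus. Classical Hadamard convexity rests on $\log \sup_{\abs{z} = \rho} \abs{h}$ being convex in $\log \rho$; once the circle $\set{\abs{z} = \rho}$ meets several of the $3 D_k$, one must argue carefully that the modified maximum remains convex up to controlled corrections. The sign condition on $h$ in $5 D_k \setminus D_k$ is precisely what makes this possible, since it forces $\abs{h}$ to behave subharmonically near each obstacle, while Lemma \ref{discBounds} keeps the per-obstacle correction independent of scale. Bookkeeping the accumulation of $C_H$ factors through $O(\log(R/r))$ dyadic iterations, and extracting the numerical coefficient $6$ in $6 C_H R$ from the geometry of how $100$-separated unit disks meet concentric circles of radii up to $R$, will be the technical heart of the proof.
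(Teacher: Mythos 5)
Your proposal is a genuinely different route from the paper, but it has a serious gap at its core. The paper does not prove a Hadamard-type three-circle inequality on the punctured annulus at all; instead, following \cite{LMNN20}, it works by contradiction with the single analytic function $f(z) = (h_x - i h_y)/z^k$ on $\Om = \set{r/2 < \abs{z} < R-1} \setminus \cup 3 D_j$. Under the contradiction hypothesis $\sup_{B(0,r)\setminus \cup 3D_j}\abs{h} \le (16r/R)^{3k}$, the paper shows via Lemma~\ref{discBounds} and Cauchy estimates that $\abs{f}$ is strictly smaller on both the inner boundary piece $W_1$ and the outer piece $W_2$ than $\sup_\Om \abs{f}$, which by the maximum principle forces the maximum of $\abs{f}$ onto some disk boundary $\del 3D_{j_1}$. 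Then an extremality argument (choosing $j_0$ maximizing $m_j\abs{z_j}^{-k}$, following a radial segment from $z_{j_0}$ inward until it hits $\del\Om$ at $s z_{j_0}$) shows this is impossible: $s z_{j_0}$ cannot lie on $W_1$ because of the lower bound on $m_{j_0}$, so it lies on some $\del 3D_{j_2}$, and then $m_{j_2}\abs{z_{j_2}}^{-k} > m_{j_0}\abs{z_{j_0}}^{-k}$ contradicts the choice of $j_0$. The power $z^{-k}$ in $f$ absorbs the geometric decay, the $100$-separation gives $\abs{z_{j_0} - s z_{j_0}} \ge 96$ so that $s^{-k} > 2C_H$, and no three-circle convexity estimate is ever needed.

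The gap in your approach is that the proposed inequality
$M_*(\rho_2) \le C_H^{N(\rho_3)} M_*(\rho_1)^{\al} M_*(\rho_3)^{1-\al}$
is not established, and I doubt it holds with the stated error. First, a three-circle estimate on $\{\rho_1<\abs{z}<\rho_3\}\setminus\cup 3D_j$ via harmonic measure requires controlling $\log\abs{h}$ on \emph{all} hole boundaries $\del 3D_k$ inside the annulus, not merely those meeting the circles; there are $O(\rho_3^2)$ such disks, not $O(\rho_3)$, so the naive per-obstacle accumulation gives $C_H^{O(R^2)}$ rather than $C_H^{O(R)}$, which is too lossy to produce $K(R,M)=O(R)$. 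Second, even for a single hole, Lemma~\ref{discBounds} controls oscillation of $h$ \emph{on} $\del 3D_k$ but does not by itself bound $\sup_{\del 3D_k}\abs{h}$ in terms of values on the reference circles; getting such a bound is essentially the problem being solved, so there is a circularity. Third, the claim that the sign condition forces $\abs{h}$ to behave subharmonically near each obstacle is not correct: $\log\abs{h}$ is subharmonic where $h\ne 0$, but the disks $D_j$ are genuine holes in the domain, and Harnack's inequality gives comparability, not subharmonicity, across them. The paper's $j_0,j_1,j_2$ extremality argument is precisely what circumvents all three of these obstructions; it is the essential new idea from \cite{LMNN20} that your proposal is missing.
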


\begin{rem}
Since this statement, Proposition \ref{harmonicProp}, appears to be very similar to \cite[Theorem 5.3]{LMNN20}, we point out the main differences:
\begin{enumerate}
\item The domain on the lefthand side of \eqref{normalization} depends on $S$ and is therefore variable.
\item The domain on the righthand side of \eqref{lowerBound} matches that on the lefthand side of \eqref{normalization}, while in \cite[Theorem 5.3]{LMNN20}, the domain on the righthand side of \eqref{lowerBound} matches that on the righthand side of \eqref{normalization}.
\item The power $K(R,M)$ here is given as a maximum of two values instead of a sum as in \cite[Theorem 5.3]{LMNN20}.
\item There are differences in the assumed bounds on $R$ and $r$ and therefore constants are different.
\end{enumerate}
\end{rem}

\begin{proof}
We may assume without loss of generality that 
$$\sup_{z \in B\pr{0, R - \frac{S}{32}} \setminus \cup 3 D_j} \abs{h(z)} = 1.$$
Set $k = \max \set{2C_HR, \frac{C_2}{3} M}$, where $C_2$ will be specified below. 
For the sake of contradiction, assume that 
\begin{equation}
\label{contraBound}
\sup_{z \in B\pr{0, r} \setminus \cup 3 D_j} \abs{h(z)} \le \pr{\frac {16r} R}^{3k}.
\end{equation}

Define the punctured annular region
$$\Om := \set{\frac r 2 < \abs{z} < R -1} \setminus \cup 3 D_j$$
and the function
$$f(z) = \frac{h_x - i h_y}{z^k}.$$
Observe that $f$ is analytic in $\Om$ and $\abs{f(z)} = \abs{\gr h(z)} \abs{z}^{-k}$.
We'll analyze the behavior of $f$ over $\Om$.
We begin with bounding $h$ and $\gr h$ over the innermost and outermost parts of the boundary of $\Om$.

Let $W_1$ be the connected component of $\del \Om$ that intersects the inner circle $\set{\abs{z} = \frac r 2}$.
If $z \in W_1$, then there are three cases to consider:
\begin{itemize}
\item[(a)] $\abs{z} \ne \frac r 2$.
\item[(b)] $\abs{z} = \frac r 2$ and there exists $j$ for which $z \in 4 D_j \setminus 3 D_j$.
\item[(c)] $\abs{z} = \frac r 2$ and $z \cap 4 D_j$ is empty for all $j$.
\end{itemize}

\begin{figure}[h]
\label{W1Figure}
\begin{tikzpicture}
\draw[thick] (-1,0) arc (180:450:1cm);
\draw[thick] (0,1) arc (-24.3:294.3:2cm);
\draw[dashed] (0,1) arc (-24.3:360:2cm);
\draw [fill=black] (0,0) circle (1.5pt);
\draw[->] (0,0) -- (0.707, 0.707);
\draw[color=black] (0.5,0.2) node {$\frac r 2$};
\draw [fill=black] (0.177,1.822) circle (2pt);
\draw[color=black] (-1.8,1.822) node {$3D_j$};
\draw[color=black] (0.45,1.822) node {$z_a$};
\end{tikzpicture}
\qquad 
\begin{tikzpicture}
\draw[thick] (-1,0) arc (180:540:1cm);
\draw (-1,1) arc (0:360:2cm);
\draw[dashed] (-0.333,1) arc (0:360:2.666cm);
\draw[color=black] (-3,1) node {$3D_j$};
\draw[color=black] (-2,3) node {$4D_j$};
\draw [fill=black] (0,0) circle (1.5pt);
\draw[->] (0,0) -- (0.707, 0.707);
\draw[color=black] (0.5,0.2) node {$\frac r 2$};
\draw [fill=black] (-0.707, 0.707) circle (2pt);
\draw[color=black] (-0.6,0.45) node {$z_b$};
\draw [fill=black] (0, -1) circle (2pt);
\draw[color=black] (0,-1.3) node {$z_c$};
\end{tikzpicture}
\caption{Possible images of $W_1$ with cases (a), (b) and (c) illustrated by the points $z_a$, $z_b$, and $z_c$, respectively.}
\label{projPics}
\end{figure}
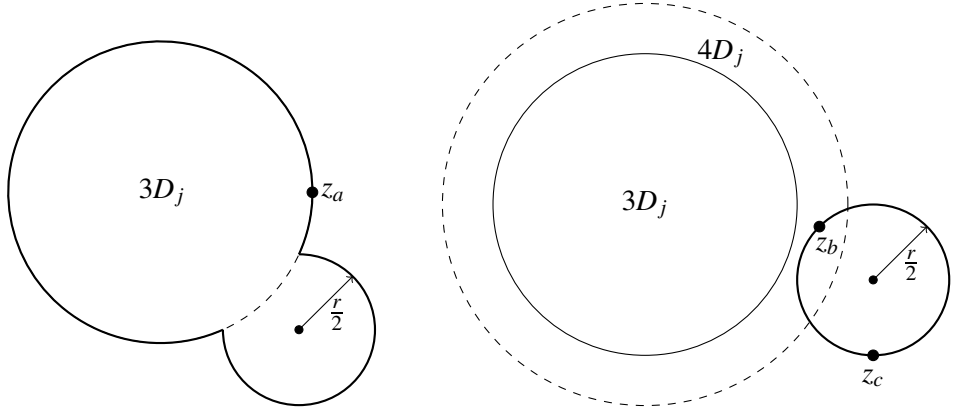

\nid \textbf{Case (a):} There exists $j$ for which $z \in \del 3 D_j$ and $3D_j \cap \set{\abs{z} = \frac r 2}$ is non-empty.
An application of Lemma \ref{discBounds} combined with the fact that $\del 3D_j \cap B(0, r)$ is non-empty shows that
\begin{align*}
\abs{h(z)}, \abs{\gr h(z)} 
&\le C_H \min_{\del 3 D_j} \abs{h} 
\le C_H \sup_{B\pr{0, r} \setminus \cup 3 D_j} \abs{h}
\le C_H \pr{\frac {16r} R}^{3k},
\end{align*}
where the last inequality follows from \eqref{contraBound}.
\\
\textbf{Case (b):} Since $h$ doesn't change signs in $B(z, 1)$, then an application of \cite[Lemma 1.11]{HL11} shows that
\begin{align*}
\abs{\gr h(z)} 
&\le 2 \abs{h(z)}
\le 2 \sup_{B(0, r) \setminus \cup 3 D_j} \abs{h}
\le 2  \pr{\frac {16r} R}^{3k},
\end{align*}
where the second inequality uses that $z \notin \cup 3 D_j$ and we have again applied \eqref{contraBound}.
\\
\textbf{Case (c):} Let $d = \min\set{1, \frac r 2}$ and observe that $B\pr{z, d} \subset B(0, r) \setminus \cup 3 D_j$, so an application of Cauchy's inequality, \cite[Lemma 1.10]{HL11}, shows that
\begin{align*}
\abs{\gr h(z)} 
&\le \frac{2}{d} \sup_{B\pr{z, d}} \abs{h}
\le \frac{2}{d} \sup_{B(0, r) \setminus \cup 3 D_j} \abs{h}
\le \frac 2 d  \pr{\frac {16r} R}^{3k}.
\end{align*}
If $d = 1$, since $k > 1$, then $\disp \frac 2 d \pr{\frac{16 r} R}^k =  2 \pr{\frac{16 r} R}^k < \frac{32r}{R} < \frac 1 {2^5} < \frac 1 2$.
On the other hand, if $d = \frac r 2$, then $\disp \frac 2 d \pr{\frac{16 r} R}^k = \frac 4 r \pr{\frac{16 r} R}^k = \frac{64}{R} \pr{\frac{16 r} R}^{k-1} < \frac 1 {2^4} < \frac 1 2$.

Since $k \ge 2C_H R \ge 2^{11} C_H$, then $2^{10k} \ge \max\set{C_H, 2} = C_H$ and $\pr{\frac r R}^k \le 2^{-10k} \le \frac 1 {\max\set{C_H, 2}}$.
Therefore, by combining all three cases, we see that
\begin{align}
\label{W1Bound}
\sup_{W_1} \abs{h}, \; \sup_{W_1} \abs{\gr h} \le \pr{\frac {16r} R}^{2k}.
\end{align}

Let $W_2$ be the connected component of $\del \Om$ that intersects the outer circle $\set{\abs{z} = R-1}$ and note that $W_2 \su \overline{B(0, R-1)} \setminus B(0, R-7)$.
Now if $z \in W_2$, there are two cases to consider:
\begin{itemize}
\item[(a)] there exists $j$ for which $z \in 4 D_j$.
\item[(b)] $\abs{z} = R-1$ and $z \cap 4 D_j$ is empty for all $j$.
\end{itemize}

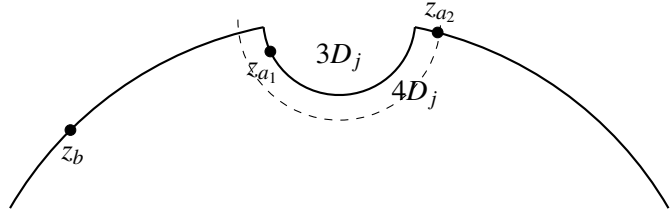
\begin{figure}[h]
\label{W1Figure}
\begin{tikzpicture}
\draw[thick] (4.33,2.5) arc (30:78.52:5cm);
\draw[thick] (-4.33,2.5) arc (150:101.48:5cm);
\draw[thick] (-0.995,4.9) arc (185.739:354.261:1cm);
\draw[dashed] (-1.333,5) arc (180:360:1.333cm);
\draw[color=black] (0,4.5) node {$3D_j$};
\draw[color=black] (1,4) node {$4D_j$};
\draw [fill=black] (1.294, 4.8296) circle (2pt);
\draw[color=black] (1.34, 5.1) node {$z_{a_2}$};
\draw [fill=black] (-0.9063, 4.577) circle (2pt);
\draw[color=black] (-1, 4.3) node {$z_{a_1}$};
\draw [fill=black] (-3.5355, 3.5355) circle (2pt);
\draw[color=black] (-3.5,3.2) node {$z_b$};
\end{tikzpicture}
\caption{A possible image of $W_2$ with case (a) illustrated by the points $z_{a_1}$ and $z_{a_2}$, and case (b) illustrated by $z_b$.}
\label{projPics}
\end{figure}

\nid\textbf{Case (a):} Since $h$ doesn't change sign in $B(z, 1)$, then an application of \cite[Lemma 1.10]{HL11} shows that
\begin{align*}
\abs{\gr h(z)} 
&\le 2 \abs{h(z)}
\le 2 \sup_{B(0, R) \setminus \cup 3 D_j} \abs{h}
\le 2 e^{M},
\end{align*}
where we have applied \eqref{normalization}.
\\
\textbf{Case (b):} Since $B(z, 1) \su B(0,R) \setminus \cup 3 D_j$, then
\begin{align*}
\abs{\gr h(z)} 
&\le 2 \sup_{B\pr{z, 1}} \abs{h}
\le 2 \sup_{B(0, R) \setminus \cup 3 D_j} \abs{h}
\le 2 e^M.
\end{align*}

By combining both cases, we see that
\begin{equation}
\label{W2Bound}
\sup_{W_2} \abs{h} \le e^M, \; \sup_{W_2} \abs{\gr h} \le 2 e^M.
\end{equation}

Now we'll use these estimates on $h$ to understand the behavior of the function $\disp f = \frac{h_x - i h_y}{z^k}$.
Define the set $\Om_1 \su \Om$ as
$$\Om_1 :=  \set{\frac r 2 < \abs{z} < R - \frac{S}{32}} \setminus \cup 3 D_j.$$
Using containment, assumption \eqref{contraBound}, and our rescaling, we see that
\begin{align*}
\sup_{B(0, \frac r 2) \setminus \cup 3D_j} \abs{h}
\le \sup_{B(0, r) \setminus \cup 3D_j} \abs{h}
\le \pr{\frac r R}^{3k}
< 1 
= \sup_{B\pr{0, R - \frac{S}{32}} \setminus \cup 3 D_j} \abs{h}.
\end{align*}
Therefore, there exists $z_0 \in \Om_1$ for which $\abs{h(z_0)} = 1$.
By \eqref{W1Bound}, we have 
\begin{align*}
\sup_{W_1} \abs{h} \le \pr{\frac {16r} R}^{2k} < \frac 1 2,
\end{align*}
so there exists $z_1 \in W_1$ for which $\abs{h(z_1)} = \al < \frac 1 2$.
Let $\Ga$ be a path in $\Om_1$ from $z_0$ to $z_1$ for which $\ell\pr{\Ga} \le 4 R$.
If we assume that $\abs{\gr h(z)}  < \frac 1 {8R}$ for all $z \in \Ga$, then 
\begin{align*}
\frac 1 2 
< \abs{h(z_1) - h(z_0)} 
= \abs{\int_\Ga \gr h(w) \cdot dw}
\le \int_\Ga \abs{\gr h(w)} \abs{dw}
< \frac 1 {8R} \ell\pr{\Ga}
< \frac 1 2,
\end{align*}
which is impossible so it follows that
\begin{align*}
\sup_{\Om} \abs{\gr h}
\ge \sup_{\Om_1} \abs{\gr h}
\ge \frac 1 {8R}.
\end{align*}
Therefore,
\begin{align*}
\sup_{\Om} \abs{f}
\ge \sup_{\Om_1} \abs{f}
\ge \sup_{\Om_1} \abs{\gr h} \pr{R - \frac{S}{32}}^{-k}
\ge \frac 1 {8R} \pr{R - \frac{S}{32}}^{-k}.
\end{align*}
An application of \eqref{W1Bound} shows that
\begin{align*}
\max_{W_1} \abs{f} 
&\le \max_{W_1} \abs{\gr h} \pr{\frac 2 r}^k
\le \pr{\frac {16r} R}^{2k} \pr{\frac 2 r}^k
= \pr{\frac {2^9 r}{R}}^k R^{-k}
< 2^{-k} R^{-k},
\end{align*}
where we have used that $\frac R r > 2^{10}$.
Since $k \ge 2 C_H R \ge 20 R$, then $2^{-k} < \frac 1 {8R}$.
In particular, by combining the previous two inequalities, we deduce that
\begin{align*}
\max_{W_1} \abs{f} 
< \frac 1 {8R} \pr{R - \frac{S}{32}}^{-k}
\le \sup_{\Om} \abs{f}.
\end{align*}
Similarly, an application of \eqref{W2Bound} shows that
\begin{align*}
\max_{W_2} \abs{f} 
&\le \max_{W_2} \abs{\gr h} \pr{R - 7}^{-k}
\le 2 e^M \pr{\frac{R - \frac{S}{32}}{R - 7}}^{k}  \pr{R - \frac{S}{32}}^{-k} .
\end{align*}
Now
\begin{align}
\label{RBig2}
2 e^M \pr{\frac{R - \frac{S}{32}}{R - 7}}^{k} < \frac 1 {8R} 
&\iff k \log \pr{\frac{R - 7 }{R - \frac S{32}}} > M + \log\pr{16R}.
\end{align}
Since $S \in \brac{\frac R 4, \frac R 2}$ and $R \ge 2^{10}$, then 
\begin{align*}
\log \pr{\frac{R - 7 }{R - \frac S{32}}}
= \log \pr{\frac{1 - 7 \cdot R^{-1} }{1 - \frac S{32R}}}
\ge \log \pr{\frac{1 - 7 \cdot R^{-1} }{1 - \frac 1{128}}}
&\ge c_1:= \log \pr{\frac{1 - 7 \cdot 2^{-10} }{1 - 2^{-7}}} 
> 2^{-10}.
\end{align*}
Since $\disp k \ge \frac{C_{2}}{3} M$, then
$$\frac k 2 \log \pr{\frac{R - 7 }{R - \frac S{32}}} \ge \frac{C_{2} c_1}{6} M,$$
while $k \ge 2 C_H R \ge 20 R$ implies that
$$\frac k 2 \log \pr{\frac{R - 7 }{R - \frac S{32}}} > 2^{-10} 10 R > \log\pr{16 R},$$
since $R \ge 2^{10}$.
If we choose $C_{2} = \frac 6 {c_1}$, then \eqref{RBig2} holds.
Thus, we see that
\begin{align*}
\max_{W_2} \abs{f} 
< \sup_{\Om} \abs{f}.
\end{align*}
Since $f$ is a holomorphic function in $\Om$, then the maximum principle guarantees that $\disp \sup_{\Om} \abs{f} = \sup_{\del \Om} \abs{f}$.
As shown above, the maximum doesn't occur on $W_1$ or $W_2$, so there must exist a disk $3 D_j \su \set{\frac r 2 < \abs{z} < R-1}$ for which $\disp \sup_{\Om} \abs{f} = \sup_{\del 3D_j} \abs{f}$.

Considering only the disks $D_j$ for which $3 D_j \su \set{\frac r 2 < \abs{z} < R-1}$, define $z_j \in \del 3 D_j$ to be the point that is closest to the origin, i.e. has the smallest modulus.
Then set $\disp m_j = \min_{\del 3 D_j} \abs{h}$.
Define $j_0$ to be the index for which
\begin{equation}
\label{j0Defn}
m_{j_0} \abs{z_{j_0}}^{-k} = \max_j m_{j} \abs{z_{j}}^{-k}
\end{equation}
and let $j_1$ be the index for which 
$$\sup_{\Om} \abs{f} = \sup_{\del 3 D_{j_1}} \abs{f}.$$
For any $z \in \Om$, an application of Lemma \ref{discBounds} shows that
\begin{equation}
\label{grhBound}
\abs{\gr h(z)} \abs{z}^{-k} 
\le \abs{z_{j_1}}^{-k} \sup_{\del 3 D_{j_1}} \abs{\gr h(z)} 
\le C_H m_{j_1} \abs{z_{j_1}}^{-k} 
\le C_H m_{j_0} \abs{z_{j_0}}^{-k},
\end{equation}
so we see that 
\begin{equation*}
\sup_{z \in \Om}\abs{\gr h(z)} 
\le C_H m_{j_0} \pr{\frac{\abs{z}}{\abs{z_{j_0}}}}^{k}.
\end{equation*}
Since $\disp \sup_{\Om} \abs{\gr h} \ge \frac 1 {8R}$ and $z \in \Om$ is arbitrary,
$$\frac 1 {8R} \le C_H m_{j_0} \pr{\frac{\abs{z}}{\abs{z_{j_0}}}}^{k}\le C_H m_{j_0} \pr{\frac{2R}{r}}^{k}$$
and then
\begin{equation}
\label{hzj0Bound}
\abs{h(z_{j_0})} \ge m_{j_0} \ge \frac 1 {8C_HR} \pr{\frac{r}{2R}}^{k}.
\end{equation}
Define $s = \inf\set{\tau \le 1 : t z_{j_0} \in \Om \text{ for all } t \in \pr{\tau,1}}$ so that the straight line path defined by $\ga(t) = t z_{j_0}$ for $s < t < 1$ is contained in $\Om$ while $s z_{t_0} \in \del \Om$.
In particular, we may integrate $\gr h$ along $\ga$ to get
\begin{align*}
h(z_{j_0}) - h(s z_{j_0})
&= \int_\ga \gr h\pr{z} \cdot d z
= \int_{s}^1 \gr h(t z_{j_0}) \cdot z_{j_0} dt.
\end{align*}
Applications of \eqref{grhBound} and \eqref{hzj0Bound} show that 
\begin{equation}
\begin{aligned}
\label{integralBound}
\abs{h(s z_{j_0})}
&\ge \abs{h(z_{j_0})} - \abs{\int_{s}^1 \gr h(t z_{j_0}) \cdot z_{j_0} dt}
\ge \abs{h(z_{j_0})} - \abs{z_{j_0} } \abs{\int_{s}^1 C_H m_{j_0}t^{k} dt} \\
&\ge m_{j_0} - \frac{m_{j_0} C_H R }{k+1} 
= m_{j_0} \pr{1 - \frac{C_H R}{k+1}}
> \frac{m_{j_0}}{2},
\end{aligned}
\end{equation}
where the last inequality uses that $k+1 > 2C_HR$.
Since $k \ge 2C_H R$, then $2^{k} > 16 C_H R$ and it follows that 
$$\pr{\frac R r}^k \ge 2^{10k} > 16 C_H R \cdot 2^{9k}.$$
Combining \eqref{integralBound} with \eqref{hzj0Bound} shows that
$$\abs{h(s z_{j_0})} > \frac{m_{j_0}}{2} \ge \frac 1 {16 C_H R} \pr{\frac{r}{2R}}^{k} > 2^{9k} \pr{\frac r R}^k\pr{\frac{r}{2R}}^{k} = \pr{\frac{16 r} R}^{2k}.$$
By comparing this bound with \eqref{W1Bound}, we conclude that $s z_{j_0} \notin W_1$ so it must hold that $s z_{j_0} \in \del 3 D_{j_2}$ for some $3 D_{j_2} \su \set{\frac r 2 < \abs{z} < R-1}$.
Then Lemma \ref{discBounds}, that $\abs{z_{j_2}} \le \abs{s z_{j_0}}$, and \eqref{integralBound} show that 
\begin{align}
\label{j2j0Comp}
m_{j_2} \abs{z_{j_2}}^{-k} 
&\ge \frac{\sup_{\del 3 D_{j_2}} \abs{h}}{C_H}\abs{z_{j_2}}^{-k}
\ge \frac{\abs{h(s z_{j_0})}}{C_H}\abs{s z_{j_0}}^{-k}
\ge \frac{s^{-k}}{2C_H}m_{j_0} \abs{z_{j_0}}^{-k}.
\end{align}
Since $z_{j_0} \in \del 3 D_{j_0}$ and $s z_{j_0} \in \del 3 D_{j_2}$ where $j_0 \ne j_2$, and the balls $\set{D_j}$ are of unit radius and $100$-separated, then $\abs{z_{j_0} - s z_{j_0}} \ge 96$.
After rearrangement, we see that $s^{-k} \ge \pr{1 - \frac{96}{R}}^{-k}$.
Since $10 \le C_H$, $2C_HR \le k$, and $\frac{96}{R} < - \log\pr{1 - \frac{96}{R}}$, then 
\begin{align*}
\log\pr{2C_H}
&< 96 \cdot 2 C_H
\le \frac{96}{R} k
< - k \log\pr{1 - \frac{96}{R}}
\le - k \log s,
\end{align*}
from which it follows that $s^{-k} > 2C_H$.
We then conclude from \eqref{j2j0Comp} that $m_{j_2} \abs{z_{j_2}}^{-k} > m_{j_0} \abs{z_{j_0}}^{-k}$ which contradicts \eqref{j0Defn} and gives the desired contradiction.
In other words, \eqref{contraBound} fails to hold and we see that
\begin{align*}
\sup_{z \in B\pr{0, r} \setminus \cup 3 D_j} \abs{h(z)} 
&> \pr{\frac{16r} R}^{3k}
= \pr{\frac {16r} R}^{3k} \sup_{z \in B\pr{0, R - \frac{S}{32}} \setminus \cup 3 D_j} \abs{h(z)},
\end{align*}
which implies \eqref{lowerBound} by our choice of $k$.
\end{proof}

\section{The proof of Theorem \ref{localLandis}}
\label{localProof}

In this section, we show how Theorem \ref{localLandis} follows from Proposition \ref{harmonicProp}.
This reduction is very similar to that described in \cite{LMNN20} with rescaling changes to account for the size of $V$.

Let $u : B(0,R) \su \R^2 \to \R$ be a solution to 
$$\LP u - V u = 0 \; \text{ in } \, B(0,R),$$ 
where for some $a \ge 1$, $\de \ge 0$,
$$\norm{V}_{L^\iny(B(0,R))} \le a^2 R^{2\de}.$$
Let $F_0$ denote the nodal set of $u$, i.e. 
$$F_0 = \set{z \in \R^2 : u(z) = 0}.$$
Define $z_0 \in \overline{B(0, R-S)}$ to satisfy 
\begin{equation}
\abs{u(z_0)} = \sup_{B(0, R-S)} \abs{u}.
\label{z0Def}
\end{equation}
For $\rho > 0$ to be specified below and $c_s$ a universal constant, there exists a set $F_1 \su B(0, R)$ which consists of a collection of $c_s \rho$-separated closed disks of radius $\rho$ which are also $c_s \rho$-separated from $0$, $z_0$, $F_0$, and $\del B(0,R)$.
Moreover, the set $F_0 \cup F_1 \cup \del B(0,R)$ is a $10 c_s \rho$-net in $B(0, R)$.
A more detailed description of this process is given in \cite[\S 2, Act I]{LMNN20}.

Define $\Om = B(0,R) \setminus \pr{F_0 \cup F_1}$ and $\Om_1 = B(0,R) \setminus F_1$.
As shown in \cite[\S 3.1]{LMNN20}, there exists a constant $c_P$ (depending on $c_s$) so that $\Om$ has Poincare constant bounded above by $c_P \rho^2$.
In particular, since $c_P \rho^2 \norm{V}_{L^\iny(B(0,R))} \le c_P \rho^2 a^2 R^{2\de}$, then by choosing $\rho \ll 1$, we can apply \cite[Lemma 3.2]{LMNN20}.
For $\eps \ll 1$ to be defined later on, let 
\begin{equation}
\label{rhoDef}
\rho = \eps a^{-1} R^{-\de}.
\end{equation}
An application of the arguments in \cite[\S 3.2]{LMNN20} then shows that there exists $\vp : \Om \to \R$ with the properties that
\begin{align}
&\LP \vp - V \vp = 0 \text{ in } \Om
\nonumber \\
&\vp -1 \in W^{1,2}_0(\Om)
\nonumber \\
&\norm{\vp -1}_\iny \le c_b \pr{\rho a R^{\de}}^2 = c_b \eps^2,
\label{vpuBound}
\end{align}
where $c_b$ is a universal constant depending on $c_P$, and we have used \eqref{rhoDef}.
By extending $\vp$ to equal $1$ across $F_0 \cup F_1$, it is then shown in \cite[Lemma 4.1]{LMNN20} that $\disp f := \frac u \vp \in W^{1,2}_{\loc}(B(0,R))$ is a weak solution to the divergence-form equation
$$\di\pr{\vp^2 \gr f} = 0 \; \text{ in } \Om_1.$$

We then introduce the Beltrami coefficient $\mu$, defined as follows:
\begin{equation*}
\mu = \left\{ \begin{array}{ll}
\frac{1 - \vp^2}{1 + \vp^2} \frac{f_x + i f_y}{f_x - i f_y} & \text{ in } \Om_1 \text{ when } \, \gr f \ne 0 \\
0 & \text{ otherwise}
\end{array} \right.
\end{equation*}
Since $\abs{\mu} \lesssim \eps^2$, then as shown in \cite{AIM09}, there exists a $K$-quasiconformal homeomorphism of the complex plane where $K \le 1 + C_K \eps^2$, where $C_K$ depends on $c_b$.
That is, there exists some  $w \in W^{1,2}_{\loc}$ which satisfies the Beltrami equation $\disp \frac{\del w}{\del \overline{z}} = \mu \frac{\del w}{\del z}$. 
In fact, an application of the Riemann uniformization theorem shows that there exists a $K$-quasiconformal homeomorphism $g$ of $B(0, R)$ onto itself with $g(0) = 0$.
Moreover, the function $h : = f \circ g^{-1}$ is harmonic in $g(\Om_1)$.

Mori's Theorem implies that
\begin{equation*}
\frac 1 {16} \abs{\frac{z_1 - z_2}{R}}^K
\le \frac{\abs{g(z_1) - g(z_2)}}{R}
\le \frac 1 {16} \abs{\frac{z_1 - z_2}{R}}^{\frac1 K}.
\end{equation*}
Thus, if we set 
\begin{equation}
\label{epsDef}
\eps = \frac{c_e}{ \sqrt{\log R}}
\end{equation}
for some $c_e > 0$, then $\disp K \in \brac{1, 1 + \frac{C_K c_e^2}{\log R}}$ and $R \simeq R^K \simeq R^{\frac 1 K}$.
By appropriately choosing our (universal) constants $c_s$ and $c_e$, it can be shown that $h$ is harmonic in $B(0, R) \setminus \cup D_j$, where each $D_j$ is a disk of radius $32\rho$.
Moreover, the disks are $3200 \rho$-separated from each other, $0$, and $g(z_0)$, while $h$ doesn't change sign in any of the annuli $100 D_j \setminus D_j$.

Since $g : B(0, R) \to B(0, R)$, then we may rescale the map to get 
$$\tilde g := \frac g {32\rho} : B(0, R) \to B\pr{0, \frac R {32\rho}}$$ 
which is onto with $\tilde g(0) = 0$.
Using \eqref{rhoDef} and \eqref{epsDef}, set
\begin{equation}
\label{tildeRDef}
\WT R 
= \frac R {32 \rho} 
= \frac R {32 \frac{c_e}{ \sqrt{\log R}} a^{-1} R^{-\de}}
= \frac{a}{32 c_e} R^{1 + \de}\sqrt{\log R} =: C_3 a R^{1+\de} \sqrt{\log R},
\end{equation}
where we introduce $C_3 = \frac{1}{32c_e}$.
From here, we see that $\tilde h := f\circ \tilde g^{-1}$ is harmonic in $\tilde g\pr{\Om_1}$.
In particular, $\tilde h$ is harmonic in $B(0, \WT R) \setminus \cup \WT D_j$, where now the $\WT D_j$ are unit disks that are 100-separated from each other, from $0$, and from $\tilde g(z_0)$.
Moreover, $\tilde h$ doesn't change signs on any annuli $5 \WT D_j \setminus \WT D_j$.

For $r \ll 1$, since $g\pr{B(0, r)}$ contains a disk of radius $r_0$, where
$$r_0 \ge \frac R {16} \pr{\frac r R}^K \ge  \frac R {16} \pr{\frac r R}^2,$$
then $\tilde g\pr{B(0, r)} \supset B\pr{0, \tilde r}$, where $\tilde r = \frac{r_0}{32 \rho}$ so that
\begin{equation}
\label{scaleBounds}
\frac{16 \tilde r}{\WT R} \ge \pr{\frac r R}^2.
\end{equation}
Since $\tilde g(0) = 0$, then for $r \ll 1$, it holds that $B\pr{0, \tilde r} \setminus \cup 3 \WT D_j = B\pr{0, \tilde r}$ and then
\begin{align}
\label{smallBallBound}
\sup_{B\pr{0, \tilde r} \setminus \cup 3 \WT D_j} \abs{\tilde h}
&= \sup_{B\pr{0, \tilde r}} \abs{\tilde h}
\le \sup_{\tilde g\pr{B\pr{0, r}}} \abs{f \circ \tilde{g}^{-1}}
= \sup_{B\pr{0, r}} \abs{f}.
\end{align}
Since $u = \vp f$, then the bound on $\vp$ from \eqref{vpuBound} implies that
\begin{equation}
\label{ufComparison}
\pr{1 - c_b \eps^2}  \abs{f(z)}  \le \abs{u(z)} \le \pr{1 + c_b \eps^2}  \abs{f(z)}.
\end{equation}
As $z_0$ is as given by \eqref{z0Def}, then for any $z_1 \in \overline{B\pr{0, R- S}}$, it follows that
\begin{align*}
\abs{u(z_1)}
\le \abs{u(z_0)}
\le \pr{1 + c_b \eps^2} \abs{f(z_0)}.
\end{align*}
Since $z_0 \in \overline{B(0, R - S) \cap \Om}$, then the distortion estimate and the separation of $\tilde g(z_0)$ from $\cup 3 \WT D_j$ implies that $\tilde g(z_0) \in B\pr{0, \WT R - \frac{\WT S}{32}} \setminus \bigcup 3 \WT D_j $, where we introduce
$$\WT S := \frac{S}{32\rho} = C_3 a S R^{\de} \sqrt{\log R}.$$
Combining these observations shows that,
\begin{align}
\label{midBallBound}
\frac 1 {1 + c_b \eps^2} \sup_{B(0, R-S)} \abs{u}
&\le \abs{f(z_0)}
= \abs{\tilde h \circ \tilde g(z_0)}
\le \sup_{B\pr{0, \WT R - \frac{\WT S}{32}} \setminus \cup 3 \WT D_j} \abs{\tilde h}.
\end{align}
Moreover, 
\begin{align}
\label{bigBallBound}
\sup_{B(0, \WT R) \setminus \cup 3 \WT D_j} \abs{\tilde h}
\le \sup_{B(0, \WT R)} \abs{\tilde h}
= \sup_{\tilde g\pr{B(0, R)}} \abs{f \circ \tilde g^{-1}}
= \sup_{B(0, R)} \abs{f}.
\end{align}
Subsequent applications of \eqref{midBallBound}, the assumption \eqref{unormalization} from Theorem \ref{localLandis}, \eqref{ufComparison}, and \eqref{bigBallBound} then show that
\begin{align*}
\sup_{B\pr{0, \WT R - \frac{\WT S}{32}} \setminus \cup 3 \WT D_j} \abs{\tilde h}
&\ge \frac 1{1 + c_b \eps^2} \sup_{B(0, R-S)} \abs{u}
\ge \frac 1{1 + c_b \eps^2} e^{-M} \sup_{B(0, R)} \abs{u} \\
&\ge \frac{{1 - c_b \eps^2}}{1 + c_b \eps^2}  e^{-M} \sup_{B(0, R)} \abs{f}
\ge e^{-\pr{M + \frac{c_d}{\log R}}} \sup_{B(0, \WT R) \setminus \cup 3 \WT D_j} \abs{\tilde h},
\end{align*}
where $c_d$ depends on $c_b$ and $c_e$ as in \eqref{epsDef}.

Set $R_0 = \max \set{2^{10},  \exp\pr{C_3^{-2}}}$.
Since $a \ge 1$, then $R \ge R_0$ implies that $C_3 a R^{\de} \sqrt{\log R} \ge a R^\de \ge 1$ and $R \ge 2^{10}$, so that $\WT R = R C_3 a R^{\de} \sqrt{\log R} \ge 2^{10}$ as well.
As $\disp S \, R^{-1} = \WT S \, \WT R^{-1}$, then the hypotheses of Proposition \ref{harmonicProp} hold with $h$, $\set{D_j}$, $r$, $R$, $S$, and $M$ replaced by $\tilde h$, $\set{\WT D_j}$, $\tilde r$, $\WT R$, $\WT S$, $\WT M := M + \frac{c_d}{\log R}$, respectively. 

Applications of \eqref{ufComparison}, \eqref{smallBallBound}, the conclusion \eqref{lowerBound} from Proposition \ref{harmonicProp}, \eqref{scaleBounds}, and \eqref{midBallBound} show that
\begin{align*}
\frac 1{1 - c_b \eps^2} \sup_{B\pr{0, r}} \abs{u}
&\ge \sup_{B\pr{0, r}} \abs{f}
\ge \sup_{B\pr{0, \tilde r} \setminus \cup 3 \WT D_j} \abs{\tilde h}
\ge \pr{\frac{16 {\tilde r}} {\WT R}}^{K\pr{\WT R, \WT M}} \sup_{B\pr{0, \WT R - \frac{\WT S}{32}} \setminus \cup 3 \WT D_j} \abs{\tilde h} \\
&
\ge \frac 1 {1 + c_b \eps^2} \pr{\frac{r} {R}}^{\WT K\pr{R, M} } \sup_{B(0, R-S)}  \abs{u}, 
\end{align*}
where we have introduced 
\begin{align*}
\WT K\pr{R, M}
&= 2 K\pr{\WT R, \WT M}
= 2 \max \set{6 C_H \WT R, C_2 \WT M}
= \max\set{12 C_H C_3 a R^{1+\de} \sqrt{\log R}, 2C_2 \pr{M + \frac {c_d}{\log R}}}.
\end{align*}
Since $\disp \frac{1 - c_b \eps^2}{1 + c_b \eps^2} \ge e^{- \frac{c_d}{\log R}}$ and $e < 4 \le \pr{2^{10}}^{\frac 1 5} \le \pr{\frac{R}{r}}^{\frac 1 5}$, then with universal $C_1$ and
$$K(R, M) :=  C_1 \max \set{ a R^{1+\de} \sqrt{\log R}, \pr{M+\frac 1 {\log R}}}$$ 
we deduce that
\begin{align*}
\sup_{B\pr{0, r}} \abs{u}
&\ge \pr{\frac{r} {R}}^{K\pr{R,M}} \sup_{B(0, R-S)}  \abs{u}
\end{align*}
and the conclusion described by \eqref{ulowerBound} has been shown. 

\subsection*{Acknowledgements}

The author would like to thank the referees for their careful readings of this manuscript and useful suggestions for improvement.

\begin{bibdiv}
\begin{biblist}

\bib{AIM09}{book}{
      author={Astala, Kari},
      author={Iwaniec, Tadeusz},
      author={Martin, Gaven},
       title={Elliptic partial differential equations and quasiconformal
  mappings in the plane},
      series={Princeton Mathematical Series},
   publisher={Princeton University Press, Princeton, NJ},
        date={2009},
      volume={48},
        ISBN={978-0-691-13777-3},
      review={\MR{2472875}},
}

\bib{BK05}{article}{
      author={Bourgain, Jean},
      author={Kenig, Carlos~E.},
       title={On localization in the continuous {A}nderson-{B}ernoulli model in
  higher dimension},
        date={2005},
        ISSN={0020-9910},
     journal={Invent. Math.},
      volume={161},
      number={2},
       pages={389\ndash 426},
         url={http://dx.doi.org/10.1007/s00222-004-0435-7},
      review={\MR{2180453 (2006k:82085)}},
}

\bib{CS97}{article}{
      author={Cruz-Sampedro, J.},
       title={Unique continuation at infinity of solutions to {S}chr\"{o}dinger
  equations with complex-valued potentials},
        date={1999},
        ISSN={0013-0915},
     journal={Proc. Edinburgh Math. Soc. (2)},
      volume={42},
      number={1},
       pages={143\ndash 153},
         url={https://doi.org/10.1017/S0013091500020071},
      review={\MR{1669361}},
}

\bib{DKW19}{article}{
      author={Davey, B.},
      author={Kenig, C.},
      author={Wang, J.-N.},
       title={On {L}andis' conjecture in the plane when the potential has an
  exponentially decaying negative part},
        date={2019},
        ISSN={0234-0852},
     journal={Algebra i Analiz},
      volume={31},
      number={2},
       pages={204\ndash 226},
      review={\MR{3937504}},
}

\bib{Dav14}{article}{
      author={Davey, Blair},
       title={Some quantitative unique continuation results for eigenfunctions
  of the magnetic {S}chr\"odinger operator},
        date={2014},
        ISSN={0360-5302},
     journal={Comm. Partial Differential Equations},
      volume={39},
      number={5},
       pages={876\ndash 945},
         url={http://dx.doi.org/10.1080/03605302.2013.796380},
      review={\MR{3196190}},
}

\bib{Dav15}{article}{
      author={Davey, Blair},
       title={A {M}eshkov-type construction for the borderline case},
        date={2015},
        ISSN={0893-4983},
     journal={Differential Integral Equations},
      volume={28},
      number={3-4},
       pages={271\ndash 290},
         url={http://projecteuclid.org/euclid.die/1423055228},
      review={\MR{3306563}},
}

\bib{Dav20a}{article}{
      author={Davey, Blair},
       title={On {L}andis' conjecture in the plane for some equations with
  sign-changing potentials},
        date={2020},
        ISSN={0213-2230},
     journal={Rev. Mat. Iberoam.},
      volume={36},
      number={5},
       pages={1571\ndash 1596},
         url={https://doi.org/10.4171/rmi/1176},
      review={\MR{4161296}},
}

\bib{DKW17}{article}{
      author={Davey, Blair},
      author={Kenig, Carlos},
      author={Wang, Jenn-Nan},
       title={The {L}andis conjecture for variable coefficient second-order
  elliptic {PDE}s},
        date={2017},
        ISSN={0002-9947},
     journal={Trans. Amer. Math. Soc.},
      volume={369},
      number={11},
       pages={8209\ndash 8237},
         url={http://dx.doi.org/10.1090/tran/7073},
      review={\MR{3695859}},
}

\bib{DW20}{article}{
      author={Davey, Blair},
      author={Wang, Jenn-Nan},
       title={Landis' conjecture for general second order elliptic equations
  with singular lower order terms in the plane},
        date={2020},
        ISSN={0022-0396},
     journal={J. Differential Equations},
      volume={268},
      number={3},
       pages={977\ndash 1042},
         url={https://doi.org/10.1016/j.jde.2019.08.035},
      review={\MR{4028997}},
}

\bib{DF90}{incollection}{
      author={Donnelly, H.},
      author={Fefferman, C.},
       title={Growth and geometry of eigenfunctions of the {L}aplacian},
        date={1990},
   booktitle={Analysis and partial differential equations},
      series={Lecture Notes in Pure and Appl. Math.},
      volume={122},
   publisher={Dekker},
     address={New York},
       pages={635\ndash 655},
      review={\MR{1044811 (92f:58184)}},
}

\bib{DF88}{article}{
      author={Donnelly, Harold},
      author={Fefferman, Charles},
       title={Nodal sets of eigenfunctions on {R}iemannian manifolds},
        date={1988},
        ISSN={0020-9910},
     journal={Invent. Math.},
      volume={93},
      number={1},
       pages={161\ndash 183},
         url={http://dx.doi.org/10.1007/BF01393691},
      review={\MR{943927 (89m:58207)}},
}

\bib{HL11}{book}{
      author={Han, Qing},
      author={Lin, Fanghua},
       title={Elliptic partial differential equations},
     edition={Second},
      series={Courant Lecture Notes in Mathematics},
   publisher={Courant Institute of Mathematical Sciences, New York},
        date={2011},
      volume={1},
        ISBN={978-0-8218-5313-9},
      review={\MR{2777537 (2012c:35077)}},
}

\bib{KSW15}{article}{
      author={Kenig, Carlos},
      author={Silvestre, Luis},
      author={Wang, Jenn-Nan},
       title={On {L}andis' {C}onjecture in the {P}lane},
        date={2015},
        ISSN={0360-5302},
     journal={Comm. Partial Differential Equations},
      volume={40},
      number={4},
       pages={766\ndash 789},
         url={http://dx.doi.org/10.1080/03605302.2014.978015},
      review={\MR{3299355}},
}

\bib{KW15}{article}{
      author={Kenig, Carlos},
      author={Wang, Jenn-Nan},
       title={Quantitative uniqueness estimates for second order elliptic
  equations with unbounded drift},
        date={2015},
        ISSN={1073-2780},
     journal={Math. Res. Lett.},
      volume={22},
      number={4},
       pages={1159\ndash 1175},
         url={http://dx.doi.org/10.4310/MRL.2015.v22.n4.a10},
      review={\MR{3391881}},
}

\bib{Ken06}{incollection}{
      author={Kenig, Carlos~E.},
       title={Some recent quantitative unique continuation theorems},
        date={2006},
   booktitle={S\'{e}minaire: \'{E}quations aux {D}\'{e}riv\'{e}es {P}artielles.
  2005--2006},
      series={S\'{e}min. \'{E}qu. D\'{e}riv. Partielles},
   publisher={\'{E}cole Polytech., Palaiseau},
       pages={Exp. No. XX, 12},
      review={\MR{2276085}},
}

\bib{KL88}{incollection}{
      author={Kondrat’ev, V.~A.},
      author={Landis, E.~M.},
       title={Qualitative theory of second-order linear partial differential
  equations},
        date={1988},
   booktitle={Partial differential equations, 3 ({R}ussian)},
      series={Itogi Nauki i Tekhniki},
   publisher={Akad. Nauk SSSR, Vsesoyuz. Inst. Nauchn. i Tekhn. Inform.,
  Moscow},
       pages={99\ndash 215, 220},
  url={https://doi-org.ccny-proxy1.libr.ccny.cuny.edu/10.1134/S0081543814050101},
      review={\MR{1133457}},
}

\bib{LW14}{article}{
      author={Lin, Ching-Lung},
      author={Wang, Jenn-Nan},
       title={Quantitative uniqueness estimates for the general second order
  elliptic equations},
        date={2014},
        ISSN={0022-1236},
     journal={J. Funct. Anal.},
      volume={266},
      number={8},
       pages={5108\ndash 5125},
         url={http://dx.doi.org/10.1016/j.jfa.2014.02.016},
      review={\MR{3177332}},
}

\bib{LMNN20}{unpublished}{
      author={Logunov, A.},
      author={Malinnikova, E.},
      author={Nadirashvili, N.},
      author={Nazarov, F.},
       title={The {L}andis conjecture on exponential decay},
        date={2020},
        note={arXiv:2007.07034},
}

\bib{M92}{article}{
      author={Meshkov, V.~Z.},
       title={On the possible rate of decay at infinity of solutions of second
  order partial differential equations},
        date={1992},
     journal={Math USSR SB.},
      volume={72},
       pages={343\ndash 361},
}

\end{biblist}
\end{bibdiv}

%

\end{document}